\theoremstyle{plain}
 \newtheorem{theorem}{Theorem}[section]
 \newtheorem{lemma}{Lemma}[section]
 \newtheorem{proposition}{Proposition}[section]
\theoremstyle{definition}
 \newtheorem{remark}[theorem]{Remark}
\DeclareMathOperator{\Impart}{Im}
\DeclareMathOperator{\Gal}{Gal}
\newcommand{\Z}{\mathbb{Z}}
\newcommand{\Q}{\mathbb{Q}}
\newcommand{\R}{\mathbb{R}}
\newcommand{\C}{\mathbb{C}}
\title{An explicit Andr\'e--Oort type result for $\mathbb{P}^1(\C) \times \mathbb{G}_m(\C)$}
\author{Roland Paulin}
\address{Roland Paulin, Department of Mathematics, University of Salzburg, Hellbrunnerstr.\ 34/I, 5020 Salzburg, Austria}
\email{paulinroland@gmail.com}
\thanks{The author was supported by the Austrian Science Fund (FWF): P24574.}
\subjclass[2010]{Primary 11G18; Secondary 11R37}
\keywords{Andr\'e--Oort conjecture, singular moduli, roots of unity, class field theory}
\date{\today}
\begin{document}

\begin{abstract}
Using class field theory we prove an explicit result of Andr\'e--Oort type for $\mathbb{P}^1(\C) \times \mathbb{G}_m(\C)$.
In this variation the special points of $\mathbb{P}^1(\C)$ are the singular moduli, while the special points of $\mathbb{G}_m(\C)$ are defined to be the roots of unity.
\end{abstract}

\maketitle

\section{Introduction} \label{sec:Intro}

The Andr\'e--Oort conjecture says that if $S$ is a Shimura variety and $V$ is a set of special points of $S$, then the irreducible components of the Zariski closure of $V$ are special subvarieties of $S$.
There are many results in the direction of this conjecture due to Moonen, Andr\'e, Yafaev, Edixhoven, Clozel, Ullmo, and Pila among others.
A lot of these are conditional on the Generalized Riemann Hypothesis (GRH).
Assuming the GRH, we can even get effective results.
Pila has unconditional results using o-minimal geometry, however these are not effective.

The first nontrivial, unconditional, effective result was obtained by K\"uhne in \cite{Kuehne:An_effective_result_of_Andre-Oort_type} and independently by Bilu, Masser and Zannier in \cite{Bilu-Masser-Zannier}.
They study the Shimura variety $\mathbb{P}^1(\C) \times \mathbb{P}^1(\C)$, where $\mathbb{P}^1(\C)$ is the modular curve $\operatorname{SL}_2(\Z) \backslash \mathcal{H}^*$.
Here the special points are of the form $(j(\tau_1), j(\tau_2))$, where $\tau_1$ and $\tau_2$ are imaginary quadratic numbers.
A special curve is either a horizontal or vertical line, or it is defined by a modular polynomial.

Pietro Corvaja asked what happens if we look at $\mathbb{P}^1(\C) \times \mathbb{G}_m(\C)$.
Here the special points are of the form $(j(\tau), \lambda)$, where $\tau$ is an imaginary quadratic number and $\lambda$ is a root of unity.
The special curves are the horizontal and vertical lines containing at least one special point.

Our main result is the following.
If $\mathcal{C}$ is a closed algebraic curve inside $\mathbb{P}^1(\C) \times \mathbb{G}_m(\C)$ not containing any horizontal or vertical line, and $\mathcal{C}$ is defined over a number field $K$, then there are only finitely many special points on $\mathcal{C}$, and in fact we can explicitly bound the complexity of such special points.
More precisely, if $\mathcal{C}$ is the zero set of the polynomial $F(X,Y) \in K[X,Y]$, and if $(j(\tau), \lambda)$ is a special point of $\mathcal{C}$, then we can bound the discriminant $\Delta(\tau)$ and the order of $\lambda$, using only the height of $F$, the degree $[K:\Q]$, and the degrees $\deg_X F$ and $\deg_Y F$.

The structure of the paper is as follows.
In section \ref{sec:Results} we state the results.
In section \ref{sec:Prelim} we discuss some preliminary facts.
In section \ref{sec:Proofs} we prove the results.
Finally, in the last section we prove the optimality of one of our bounds in the case $K=\Q$.

\section{Results} \label{sec:Results}

Let $\mathcal{H}$ denote the complex upper half-plane.
We call $(\alpha, \lambda) \in \mathbb{P}^1(\C) \times \mathbb{G}_m(\C)$ a special point, if $\alpha = j(\tau)$ for some imaginary quadratic $\tau \in \mathcal{H}$ and $\lambda \in \C$ is a root of unity.
First we state the noneffective version of our result.
\begin{theorem} \label{thm:main-noneffective}
Let $\mathcal{C} \subseteq \mathbb{P}^1(\C) \times \mathbb{G}_m(\C)$ be a closed algebraic curve, defined over a number field.
Then $\mathcal{C}$ contains infinitely many special points if and only if either
\begin{itemize}
\item
$\mathcal{C}$ contains a horizontal line $\mathbb{P}^1(\C) \times \lambda$, where $\lambda$ is a root of unity, or
\item
$\mathcal{C}$ contains a vertical line $\alpha \times \mathbb{G}_m(\C)$, where $\alpha = j(\tau)$ for some imaginary quadratic $\tau$.
\end{itemize}
\end{theorem}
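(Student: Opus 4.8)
The plan is to prove the only substantial implication, namely that if $\mathcal{C}$ contains no horizontal line $\mathbb{P}^1(\C)\times\lambda$ with $\lambda$ a root of unity and no vertical line $\alpha\times\mathbb{G}_m(\C)$ with $\alpha$ a singular modulus, then $\mathcal{C}$ contains only finitely many special points; the converse is trivial, since such a horizontal line contains $(j(\tau),\lambda)$ for every imaginary quadratic $\tau$ and such a vertical line contains $(\alpha,\zeta)$ for every root of unity $\zeta$. I would first reduce to a geometrically irreducible curve: $\mathcal{C}$ has finitely many irreducible components, permuted by $\Gal(\overline{\Q}/K)$, and infinitely many special points must lie on one of them; if that component is a line, then, being a line carrying infinitely many special points, it is one of the two exceptional lines and we are done. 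So I may assume $\mathcal{C}=\{F=0\}$ with $F\in K[X,Y]$ irreducible (after enlarging $K$) and $\deg_X F\ge 1$, $\deg_Y F\ge 1$, and I work inside $\mathbb{A}^1(\C)\times\mathbb{G}_m(\C)$ because no special point lies over $X=\infty$. Assuming for contradiction that there are infinitely many special points, I note that no singular modulus can be the first coordinate of infinitely many of them — otherwise $F(\alpha,Y)\equiv 0$ and $\alpha\times\mathbb{G}_m(\C)\subseteq\mathcal{C}$ — and symmetrically for second coordinates; hence I can pick special points $P_i=(j(\tau_i),\zeta_i)$ with the $j(\tau_i)$ pairwise distinct and the $\zeta_i$ pairwise distinct, so that the discriminant $\Delta_i$ of the CM order attached to $\tau_i$ satisfies $|\Delta_i|\to\infty$ and the order $N_i$ of $\zeta_i$ satisfies $N_i\to\infty$.

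Next I would bring in the Galois action. Since $\Gal(\overline{\Q}/\Q)$ permutes the singular moduli and the primitive $N_i$-th roots of unity, every $\Gal(\overline{\Q}/K)$-conjugate of $P_i$ is again a special point on $\mathcal{C}$. Counting these conjugates in fibres of the first projection: each first coordinate occurring is a conjugate $j(\tau_i')$ of $j(\tau_i)$, still a singular modulus, so $F(j(\tau_i'),Y)\not\equiv 0$ and its fibre has at most $\deg_Y F$ points; summing over the $[K(j(\tau_i)):K]$ possible first coordinates gives $[K(j(\tau_i),\zeta_{N_i}):K(j(\tau_i))]\le\deg_Y F$. The same argument with the second projection gives $[K(j(\tau_i),\zeta_{N_i}):K(\zeta_{N_i})]\le\deg_X F$.

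Then comes the class field theory, which carries the real content. Since $K(\zeta_{N_i})/K$ is abelian, the first bound reads $[K(\zeta_{N_i}):D_i]\le\deg_Y F$ with $D_i:=K(\zeta_{N_i})\cap K(j(\tau_i))$, so $[D_i:K]\ge\varphi(N_i)/([K:\Q]\deg_Y F)$. But $D_i$ is abelian over $K$ and lies in $K\cdot H_i$, where $H_i=\Q(\sqrt{\Delta_i},j(\tau_i))$ is the ring class field attached to $\tau_i$; complex conjugation acts on the relevant class group by inversion, so $\Gal(H_i/\Q)$ is generalized dihedral, and hence the largest subextension of $K\cdot H_i/K$ that is abelian over $K$ has degree at most the order of the abelianization of $\Gal(H_i/\Q)$, which by genus theory is the genus field of $\Delta_i$, of degree at most $2^{\omega(\Delta_i)+1}$ (with $\omega$ the number of distinct prime divisors). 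Therefore $\varphi(N_i)\le[K:\Q]\deg_Y F\cdot 2^{\omega(\Delta_i)+1}$. Combining with the second bound, which gives $h(\Delta_i)=[\Q(j(\tau_i)):\Q]\le[K:\Q]\deg_X F\cdot\varphi(N_i)$, I obtain
\[
h(\Delta_i)\le[K:\Q]^{2}(\deg_X F)(\deg_Y F)\,2^{\omega(\Delta_i)+1}.
\]
Since $2^{\omega(\Delta_i)}=|\Delta_i|^{o(1)}$ while Siegel's theorem gives $h(\Delta_i)\gg_{\varepsilon}|\Delta_i|^{1/2-\varepsilon}$, this fails once $|\Delta_i|$ is large, contradicting $|\Delta_i|\to\infty$; hence $\mathcal{C}$ has only finitely many special points.

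The reductions and the fibrewise conjugate-counting are routine; the hard point — and the source of the ``class field theory'' in the title — is the fact underlying the third paragraph: a singular modulus generates only a small abelian extension of $\Q$, namely its genus field, so $K(j(\tau))$ cannot contain a substantial part of a cyclotomic field, forcing $\varphi(N)$ to be tiny compared with $h(\Delta)$. The only ingredient that makes this argument ineffective is Siegel's lower bound for $h(\Delta)$; the explicit theorem promised in the introduction should instead combine the effective Goldfeld--Gross--Zagier bound with an explicit analysis of the ramification of $D_i$.
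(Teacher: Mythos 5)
Your overall architecture is sound and close in spirit to the paper's: the trivial converse, the reduction to components with $\deg_X,\deg_Y\ge 1$, the fibre bound $[K(j(\tau),\zeta_N):K(j(\tau))]\le\deg_Y F$, and the idea that $K(j(\tau))\cap K(\zeta_N)$ is squeezed between a ring class field and a cyclotomic field are all exactly the right moves. But the central class-field-theoretic step is wrong as stated. You bound the largest subextension of $K\cdot H_i/K$ that is abelian over $K$ by $|\Gal(H_i/\Q)^{\mathrm{ab}}|$. In Galois-theoretic terms this asserts $|S/S'|\le|G/G'|$ for the subgroup $S=\Gal(H_i/H_i\cap K)$ of $G=\Gal(H_i/\Q)$, and abelianizations of subgroups are not controlled by the abelianization of the ambient group. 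Concretely, if $\Q(\sqrt{\Delta_i})\subseteq K$ (e.g.\ $K=\Q(i)$ and $\Delta_i=-4f_i^2$ with $f_i\to\infty$), then $S$ sits inside the cyclic part of the generalized dihedral group, $K\cdot H_i=H_i$ is already abelian over $K$ of degree $h(\Delta_i)\to\infty$, and your claimed bound $2^{\omega(\Delta_i)+1}$ fails badly. Since a fixed $K$ can contain $\Q(\sqrt{\Delta_i})$ for infinitely many $i$ (varying conductor), this case cannot be discarded. The fix is precisely the paper's first reduction: replace $F$ by $F'=\prod_{\sigma}F^{\sigma}\in\Q[X,Y]$ and work over $\Q$, where the intersection $\Q(j(\mathcal{O}))\cap\Q(\zeta_N)$ is Galois over $\Q$ and is simultaneously an abelian quotient of the full generalized dihedral group $\Gal(L(j(\mathcal{O}))/\Q)$ and a quotient of $(\Z/N\Z)^{\times}$, hence an elementary abelian $2$-group of order at most $2^{\omega(N)-c_1(N)}$ (the paper bounds it by the $N$-side rather than by genus theory on the $\Delta$-side, which is what makes the bound on $N$ independent of $\Delta$). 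Alternatively you would have to prove that $S$ still contains a reflection, which exactly fails in the scenario above.

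Separately, your route to bounding $\Delta$ is genuinely different from the paper's and is strictly weaker for the stated goals. You invoke Siegel's lower bound $h(\Delta)\gg_{\varepsilon}|\Delta|^{1/2-\varepsilon}$ against $h(\Delta)\le C\cdot 2^{\omega(\Delta)+1}$, which is legitimate for the noneffective theorem but ineffective; your closing suggestion that the effective version should use Goldfeld--Gross--Zagier is not what the paper does. The paper needs no class number lower bound at all: it bounds $N$ purely from the intersection estimate, and then bounds $|\Delta|$ by comparing the archimedean size $|j(\tau)|\approx e^{\pi\sqrt{|\Delta|}}$ against the relation $F(j(\tau),\lambda)=0$, using a Liouville-type lower bound for the nonvanishing coefficient $g_m(\lambda)$. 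So even after repairing the gap above, you should be aware that the deep analytic input you rely on is avoidable.
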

We formulate an effective version of this theorem.
Let $K$ be a number field of degree $d$ over $\Q$ with a fixed embedding into $\C$.
Let $F \in K[X,Y]$ be a nonconstant polynomial, and let $\delta_1 = \deg_X F$ and $\delta_2 = \deg_Y F$.
The equation $F(X,Y) = 0$ defines an algebraic curve in $\mathbb{A}^1(\C) \times \mathbb{A}^1(\C)$.
We assume that this curve contains no vertical or horizontal line, i.e.\ there is no $a \in \C$ such that $F(a,Y) = 0$ in $\C[Y]$, and there is no $b \in \C$ such that $F(X,b) = 0$ in $\C[X]$.
In other words, $F(X,Y)$ does not have a nonconstant divisor $f \in K[X]$ or $g \in K[Y]$.
Then clearly $\delta_1, \delta_2 > 0$.
(The condition on $F$ is satisfied e.g.\ if $\delta_1, \delta_2 > 0$ and $F$ is geometrically irreducible.)
We can restrict the above curve to $\mathbb{A}^1(\C) \times \mathbb{G}_m(\C)$, and then take the Zariski closure in $\mathbb{P}^1(\C) \times \mathbb{G}_m(\C)$.
We call the obtained curve $\mathcal{C}$.

Let $h(F)$ denote the height of the polynomial $F$ (so $h(F)$ is the absolute logarithmic Weil height of the point defined by the nonzero coefficients of $F$ in projective space, see the definition in section \ref{sec:Prelim}).
Let $(\alpha, \lambda)$ be a special point of $\mathcal{C}$, where $\alpha = j(\tau)$ for some $\tau \in \mathcal{H}$.
Let $\Delta$ denote the discriminant of the endomorphism ring of the complex elliptic curve $\C/(\Z + \Z \tau)$, and let $N$ be the smallest positive integer such that $\lambda^N = 1$.

If $n = 2^k (2m+1)$, where $k,m \in \Z_{\ge 0}$, then let
\[
c_1(n) = \begin{cases}
0 & \textrm{if } k=0 \textrm{ or } 2, \\
1 & \textrm{if } k=1, \\
-1 & \textrm{if } k \ge 3,
\end{cases}
\]
and
\[
c_2(n) = \begin{cases}
1 & \textrm{if } 4 \mid n \textrm{ or } p \mid n \textrm{ for some prime } p \equiv -1 \pmod{4} \\
0 & \textrm{otherwise.}
\end{cases}
\]
Note that $c_1(n)+c_2(n) \in \{0,1,2\}$.

The number of prime divisors of $n$ is denoted by $\omega(n)$.
Euler's totient function is denoted by $\varphi$.

\begin{theorem} \label{thm:main-effective}
In the above situation
\begin{equation} \label{eq:N-precise-bound}
\frac{\varphi(N)}{2^{\omega(N)}} \le \frac{\varphi(N)}{2^{\omega(N)-c_1(N)-c_2(N)}} \le d\delta_2,
\end{equation}
moreover
\begin{equation} \label{eq:N-upper-bound}
N < a^{1+\frac{2}{\log \log a}}
\end{equation}
and
\begin{equation} \label{eq:Delta-upper-bound}
|\Delta| < a^{2+\frac{5}{\log \log a}} (d h(F) + (d-1)(\delta_1 + \delta_2) \log 2 + 1)^2
\end{equation}
with $a = \max(8, d \delta_2)$.
\end{theorem}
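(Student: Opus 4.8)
The plan is to extract all three estimates from the constraint that, since $\mathcal C$ is defined over $K$ and $(j(\tau),\lambda)$ lies on $\mathcal C$, so does every Galois conjugate of this point. \emph{Step 1.} Because $\mathcal C$ contains no vertical line, $F(j(\tau),Y)$ is a nonzero element of $K(j(\tau))[Y]$ of degree at most $\delta_2$ vanishing at $\lambda$; hence the minimal polynomial of $\lambda$ over $K(j(\tau))$ has degree at most $\delta_2$, and writing $E=\Q(\zeta_N)\cap K(j(\tau))$ and using that $\Q(\zeta_N)/\Q$ is abelian,
\[
\frac{\varphi(N)}{[E:\Q]}=[\Q(\zeta_N):E]=[K(j(\tau),\zeta_N):K(j(\tau))]\le\delta_2 .
\]
\emph{Step 2.} It remains to bound $[E:\Q]$. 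As $E/\Q$ is abelian, $[E:E\cap\Q(j(\tau))]=[E\cdot\Q(j(\tau)):\Q(j(\tau))]\le[K(j(\tau)):\Q(j(\tau))]\le d$. The field $E\cap\Q(j(\tau))$ is abelian over $\Q$ and lies in $\Q(j(\tau))$, which is contained in the ring class field $H_\Delta$; since $\Gal(H_\Delta/\Q)$ is generalized dihedral, every such subfield lies in the genus field of $\Delta$ and is multiquadratic. Moreover $\Q(j(\tau))$ has a real embedding — the $j$-invariant of the CM point $\tau_0$ of the principal class is real, because $-\overline{\tau_0}$ is $\operatorname{SL}_2(\Z)$-equivalent to $\tau_0$, and all singular moduli of discriminant $\Delta$ are $\Q$-conjugate — so neither $\Q(j(\tau))$ nor $E\cap\Q(j(\tau))$ contains an imaginary quadratic field, and a multiquadratic field with this property is totally real. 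Hence $E\cap\Q(j(\tau))$ lies in the maximal multiquadratic subfield $R$ of the maximal totally real subfield $\Q(\zeta_N)^{+}$, and a direct analysis of $(\Z/N\Z)^{\times}$ (which cyclic factors have even order, and whether complex conjugation lies in the kernel of the maximal $2$-elementary quotient) shows $[R:\Q]=2^{\omega(N)-c_1(N)-c_2(N)}$. Therefore $[E:\Q]\le d\,2^{\omega(N)-c_1(N)-c_2(N)}$; combined with Step 1 this gives the second inequality in \eqref{eq:N-precise-bound}, and the first is immediate from $c_1(N)+c_2(N)\ge0$.

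For \eqref{eq:N-upper-bound} I would argue as follows. By \eqref{eq:N-precise-bound}, $\varphi(N)/2^{\omega(N)}\le d\delta_2\le a$. Since $\varphi(N)/2^{\omega(N)}=\prod_{p^{e}\|N}\tfrac12p^{e-1}(p-1)$ grows almost linearly in $N$ — because $2^{\omega(N)}=N^{o(1)}$ (as $\omega(N)\ll\log N/\log\log N$) and $\varphi(N)/N=\prod_{p\mid N}(1-1/p)$ is bounded below by a Mertens-type quantity — an upper bound for $N$ follows, and I would make it explicit by estimates of Rosser–Schoenfeld type; the truncation $a=\max(8,d\delta_2)$, for which $\log\log a>0$, makes the bound $N<a^{1+2/\log\log a}$ uniform.

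For \eqref{eq:Delta-upper-bound} I would pass to a $\Gal(\overline\Q/\Q)$-conjugate $F^{\sigma}$ of $F$, which changes neither $h(F)$, $\delta_1$, $\delta_2$ nor the "no line" hypotheses, chosen so that $j(\tau)=j(\tau_0)$ for the principal CM point, with $\Impart\tau_0=\sqrt{|\Delta|}/2$; the $q$-expansion $j(\tau_0)=q^{-1}+744+\cdots$ then gives $|j(\tau_0)|\ge e^{\pi\sqrt{|\Delta|}}-c_0$ for an absolute constant $c_0$. Writing $\lambda_0$ for the corresponding root of unity (again of order $N$), we have $F^{\sigma}(j(\tau_0),\lambda_0)=0$, and $F^{\sigma}(X,\lambda_0)\in\overline\Q[X]$ is nonzero (no horizontal line) of some degree $\le\delta_1$ with leading coefficient $b\in K^{\sigma}(\zeta_N)$. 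Cauchy's bound on the roots of a polynomial gives $|j(\tau_0)|\le 1+\|F^{\sigma}(X,\lambda_0)\|_1/|b|$, where the sum of absolute values of the coefficients is controlled in terms of $h(F)$, $\delta_1$, $\delta_2$, $d$ because $|\lambda_0|_v=1$ at every place (this is where Gelfond-type inequalities, and hence the term $(d-1)(\delta_1+\delta_2)\log 2$, enter), while $b\ne 0$ lies in a number field of degree at most $d\varphi(N)$ and height at most $h(F)+\log(\delta_2+1)$, so a Liouville inequality bounds $|b|$ from below. Combining, $\pi\sqrt{|\Delta|}$ is bounded by a quantity of the shape $\varphi(N)\bigl(dh(F)+(d-1)(\delta_1+\delta_2)\log 2+1\bigr)$; inserting $\varphi(N)\le N<a^{1+2/\log\log a}$ from \eqref{eq:N-upper-bound} and squaring yields \eqref{eq:Delta-upper-bound} (for small $|\Delta|$ the bound is trivial).

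The step I expect to be the real obstacle is Step 2: obtaining exactly the exponent $\omega(N)-c_1(N)-c_2(N)$ requires the precise description of which quadratic and $2$-power cyclotomic subfields are totally real together with careful bookkeeping of the field $K$, and the class-field-theoretic facts that ring class fields are generalized dihedral over $\Q$ and that the $j$-invariant of a principal CM point is real are exactly what allow one to replace a potentially imaginary multiquadratic field by a totally real one. Steps 3 and 4 are then routine bookkeeping with explicit estimates for $\varphi$ and $\omega$ and with heights.
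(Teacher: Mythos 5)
Your proposal follows essentially the same route as the paper's proof: \eqref{eq:N-precise-bound} comes from comparing $[\Q(\zeta_N):\Q(\zeta_N)\cap K(j(\tau))]\le\delta_2$ with the fact that any abelian-over-$\Q$ subfield of the ring class field is multiquadratic (generalized dihedral Galois group) and, by the reality of $j(\mathcal{O})$, totally real, hence of degree at most $2^{\omega(N)-c_1(N)-c_2(N)}$; \eqref{eq:N-upper-bound} is Rosser--Schoenfeld bookkeeping; and \eqref{eq:Delta-upper-bound} comes from playing the exponential growth of $j$ at the principal CM point against a Liouville-type lower bound for the leading nonvanishing coefficient of $F(X,\lambda)$ (the paper uses a resultant bound against $X^N-1$). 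The one organizational difference is that the paper first replaces $F$ by $\prod_{\sigma}F^{\sigma}\in\Q[X,Y]$ and then reduces to the case where $\Z+\Z\tau$ is an order, whereas you carry $K$ along and absorb the factor $d$ via $[E:E\cap\Q(j(\tau))]\le d$; both work, but note that the term $(d-1)(\delta_1+\delta_2)\log 2$ actually originates from the height of the product $\prod_\sigma F^\sigma$ (Bombieri--Gubler), not from evaluating at $\lambda_0$ as you suggest, and the reduction to $\Q$ lets one take $F$ with coprime integer coefficients so that $H(F)$ literally bounds each coefficient, which your single-conjugate archimedean estimate would otherwise have to work around. Your sketches of the explicit computations for \eqref{eq:N-upper-bound} and for the final height bookkeeping are consistent with the paper's but are not carried far enough to confirm the precise exponents $1+\tfrac{2}{\log\log a}$ and $2+\tfrac{5}{\log\log a}$.
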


Theorem \ref{thm:main-effective} clearly implies Theorem \ref{thm:main-noneffective}, because there are only finitely many special points $(\alpha, \lambda)$ with $\Delta$ and $N$ bounded.
We will prove Theorem \ref{thm:main-effective} in three steps.
In the first step we reduce the statement to the case $K = \Q$.
The second step shows that we may assume that $\Z + \Z \tau$ is an order.
Finally, in the third step we prove the theorem for $K = \Q$ and $\Z + \Z \tau$ an order.

In section \ref{sec:Optimality} we will show that the bound in \eqref{eq:N-precise-bound} is optimal if $K=\Q$.

\section{Preliminaries} \label{sec:Prelim}

The (absolute logarithmic Weil) height of a point $P = (a_0: \dotsc: a_n) \in \mathbb{P}^n_{\overline{\Q}}$ is defined by
\[
h(P) = \sum_{v \in M_K} \frac{[K_v:\Q_v]}{[K:\Q]} \log(\max_i |a_i|_v),
\]
where $K$ is any number field containing all $a_i$, $M_K$ is the set of places of $K$, and for any place $v$, $|\cdot|_v$ is the absolute value on $K$ extending a standard absolute value of $\Q$.
Similarly, the (absolute logarithmic Weil) height of a polynomial $F \in \overline{\Q}[X_1, \dotsc, X_n]$ with nonzero coefficients $c_i$ is defined by
\[
h(F) = \sum_{v \in M_K} \frac{[K_v:\Q_v]}{[K:\Q]} \log(\max_i |c_i|_v),
\]
where $K$ is a number field containing the coefficients of $F$.
We use the notation $H(F) = e^{h(F)}$.
If $F \in \Z[X_1, \dotsc, X_n]$, and the gcd of the coefficients of $F$ is $1$, then $H(F)$ is equal to the maximum of the euclidean absolute values of the coefficients of $F$.

If $K$ is a number field and $\alpha \in K$, then the (absolute logarithmic Weil) height of $\alpha$ is
\[
h(\alpha) = h(\alpha:1) = \sum_{v \in M_K} \frac{[K_v:\Q_v]}{[K:\Q]} \log \max(1, |\alpha|_v).
\]
We use the notation $H(\alpha) = e^{h(\alpha)}$.

If $\mathcal{O}$ is an order in an imaginary quadratic number field $L$, then the class number $h(\mathcal{O})$ denotes the number of equivalence classes of proper fractional ideals of $\mathcal{O}$ (see e.g.\ \cite{Cox:Primes_of_the_form_x2+ny2} or \cite{Neukirch:Algebraic_Number_Theory}).
Since $\mathcal{O}$ is an order in $L$, we can write it in the form $\Z + \Z \tau_0$ for some $\tau_0$ in $L \cap \mathcal{H}$.
Then the discriminant of the order $\mathcal{O}$ is $D(\mathcal{O}) = -4 (\Impart \tau_0)^2$ (see e.g.\ \S 7, Ch.\ 2 of \cite{Cox:Primes_of_the_form_x2+ny2}).
This is a negative integer congruent to $0$ or $1$ modulo $4$.

We introduce the notation $j(\Lambda)$, where $\Lambda \subseteq \C$ is a lattice.
There are complex numbers $\omega_1, \omega_2 \in \C^{\times}$ such that $\Lambda = \Z \omega_1 + \Z \omega_2$ and $\tau = \frac{\omega_2}{\omega_1} \in \mathcal{H}$.
Then we define $j(\Lambda)$ to be $j(\tau)$.
The modularity of the $j$-function ensures that $j(\Lambda)$ is well defined.

We will use the following estimate from \cite{Wu:A_note_on_Andre-Oort} for the $j$-function.
Similar estimates can be found in \cite{Bilu-Parent}.
\begin{proposition} \label{prop:j-estimate}
If $\tau \in \mathcal{H}$ and $\Impart \tau > \frac{1}{2\pi} \log 6912$, then $\frac{1}{2} \le \left|\frac{j(\tau)}{e^{-2\pi i \tau}}\right| \le 2$.
\end{proposition}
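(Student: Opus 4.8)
The plan is to pass to the variable $q = e^{2\pi i\tau}$. Then $e^{-2\pi i\tau} = q^{-1}$, so $\frac{j(\tau)}{e^{-2\pi i\tau}} = q\,j(\tau)$, and the hypothesis $\Impart\tau > \frac{1}{2\pi}\log 6912$ is exactly the statement $|q| < r$, where $r := \frac{1}{6912}$. It therefore suffices to show
\[
|q\,j(\tau) - 1| \le \tfrac12 \qquad \text{whenever } |q| \le r,
\]
since this yields $\tfrac12 = 1 - \tfrac12 \le |q\,j(\tau)| \le 1 + \tfrac12 \le 2$, which is the assertion.

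To control $q\,j(\tau)$ I would use the Eisenstein/discriminant formula $j = E_4^3/\Delta$, where $E_4 = 1 + 240\sum_{n\ge1}\sigma_3(n)q^n$ and $\Delta = q\prod_{n\ge1}(1-q^n)^{24}$, so that
\[
q\,j(\tau) = E_4(q)^3\prod_{n\ge1}(1-q^n)^{-24}.
\]
The first factor is close to $1$: using $\sigma_3(n) = n^3\sum_{d\mid n}d^{-3} \le \zeta(3)n^3$ and the identity $\sum_{n\ge1}n^3 x^n = \frac{x(1+4x+x^2)}{(1-x)^4}$, one gets $|E_4(q) - 1| \le 240\zeta(3)\frac{r(1+4r+r^2)}{(1-r)^4} < \frac{1}{20}$, whence, expanding $E_4^3 - 1 = (E_4-1)(E_4^2+E_4+1)$, we obtain $|E_4(q)^3 - 1| < \frac{1}{6}$ and $|E_4(q)^3| < \frac{7}{6}$. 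For the second factor, the principal branch of the logarithm gives $\bigl|\log\prod_{n\ge1}(1-q^n)^{24}\bigr| \le 24\sum_{n\ge1}\sum_{k\ge1}\frac{|q|^{nk}}{k} \le \frac{24r}{(1-r)^2} < \frac{1}{250}$, so $\bigl|\prod_{n\ge1}(1-q^n)^{24} - 1\bigr| < e^{1/250} - 1 < \frac{1}{200}$, and hence $\bigl|\prod_{n\ge1}(1-q^n)^{-24} - 1\bigr| < \frac{1}{200}$ with the factor of modulus at most $\frac{201}{200}$. Combining via $|AB - 1| \le |A-1|\,|B| + |B - 1|$ gives $|q\,j(\tau) - 1| < \frac16\cdot\frac{201}{200} + \frac{1}{200} < \frac14 < \frac12$, as needed.

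A self-contained alternative — essentially the estimate used in the cited references \cite{Wu:A_note_on_Andre-Oort} and \cite{Bilu-Parent} — is to argue directly from the $q$-expansion $j(\tau) = q^{-1} + 744 + \sum_{n\ge1}c_n q^n$: the coefficients $c_n$ are positive integers and satisfy a classical explicit bound such as $c_n \le e^{4\pi\sqrt n}$ (valid for all $n \ge 1$), so $|q\,j(\tau) - 1| \le 744\,r + \sum_{n\ge1}c_n r^{n+1}$, and since $744\,r \approx 0.11$ while $\sum_{n\ge1}c_n r^{n+1}$ is dominated by its first term ($< e^{4\pi}r^2 \approx 6\cdot10^{-3}$), the right-hand side is at once seen to lie below $\frac12$.

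The only genuine issue is numerical bookkeeping: one must fix the constants carefully enough to be certain the tail sums stay below $\frac12$ at the boundary value $r = \frac{1}{6912}$. As the displayed estimates indicate, the margin is very comfortable (one in fact obtains $|q\,j(\tau) - 1| < \frac14$), so no delicate analysis is required; any of the standard explicit bounds for $\sigma_3$, for $\prod_{n\ge1}(1-q^n)^{24}$, or for the $j$-coefficients $c_n$ will suffice, and the constant $6912$ in the hypothesis is chosen precisely to leave this room.
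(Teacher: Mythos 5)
The paper does not prove this proposition at all; it simply imports it as a black box from the cited references (Wu, and Bilu--Parent), so there is no ``paper's proof'' to compare against. What you have done is supply a genuine proof of the cited estimate, and it is correct. Your main route, writing $q\,j(\tau) = E_4(q)^3\prod_{n\ge1}(1-q^n)^{-24}$ and bounding each factor near $1$ using $\sigma_3(n)\le\zeta(3)n^3$, the identity $\sum n^3 x^n = x(1+4x+x^2)/(1-x)^4$, and the logarithm of the Euler product, is sound: I reproduced the numerics ($|E_4-1|\approx 0.042 < 1/20$, $24r/(1-r)^2\approx 0.0035 < 1/250$, and the combination bound $|AB-1|\le|A-1||B|+|B-1|$ lands around $0.17$, comfortably below $1/4$), and the conclusion $|q\,j(\tau)-1|<\tfrac14$ indeed gives $|q\,j(\tau)|\in(\tfrac34,\tfrac54)\subset[\tfrac12,2]$. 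Your second sketch (direct $q$-expansion $j = q^{-1}+744+\sum c_n q^n$ with an explicit coefficient bound $c_n \le e^{4\pi\sqrt n}$) is almost certainly the method of the sources the paper cites; it is shorter but leans on a less elementary input (the explicit upper bound for $c_n$), whereas your $E_4^3/\Delta$ route is self-contained modulo standard $q$-series facts. Either way, your proposal is a complete and correct argument for a statement the paper itself leaves to the literature; the only nitpick is that some of your intermediate constants (for instance ``$<\frac{1}{200}$'' for the inverted product) are asserted a hair tightly, though each is in fact true and, as you note, the slack at $r=1/6912$ is enormous.
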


Let $G$ be a group.
We say that $G$ is an elementary abelian $2$-group, if $G$ is a finite abelian group such that every element of $G$ has order at most $2$, or in other words, if $G \cong (\Z/2\Z)^s$ for some integer $s \ge 0$.
We say that $G$ is a generalized dihedral group, if $G$ is isomorphic to a semidirect product $H \rtimes (\Z/2\Z)$, where $H$ is a group, and the nontrivial element of $\Z/2\Z$ acts on $H$ by inverting elements (so $H$ must be abelian).
The following lemma describes finite abelian quotients of generalized dihedral groups.
\begin{lemma} \label{lemma:gen-dihedral-group}
A finite abelian quotient of a generalized dihedral group is an elementary abelian $2$-group.
\end{lemma}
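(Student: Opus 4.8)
The plan is to exploit the defining relation of a generalized dihedral group at the level of commutators. Write $G = H \rtimes \langle t \rangle$, where $H$ is abelian, $t^2 = 1$, and $t h t^{-1} = h^{-1}$ for all $h \in H$; note that every element of $G$ lies in $H \cup Ht$, since $\langle t \rangle = \{1,t\}$. Suppose $\pi \colon G \to Q$ is a surjective homomorphism with $Q$ a finite abelian group. We must show $Q \cong (\Z/2\Z)^s$ for some $s \ge 0$, equivalently that every element of $Q$ has order dividing $2$.

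The first step is to observe that $\pi$ factors through the abelianization: since $Q$ is abelian, $[G,G] \subseteq \ker \pi$. The key computation is then that every element of $G$ already squares into $[G,G]$. For $h \in H$ the commutator $[t,h] = t h t^{-1} h^{-1} = h^{-1} h^{-1} = h^{-2}$ lies in $[G,G]$, so $h^2 \in [G,G]$. For an element $ht \in Ht$ we compute directly $(ht)^2 = h (t h t^{-1}) t^2 = h h^{-1} = 1$. Hence $g^2 \in [G,G]$ for every $g \in G$.

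Combining these two steps, for any $q \in Q$ choose $g \in G$ with $\pi(g) = q$; then $q^2 = \pi(g^2) = 1$ because $g^2 \in [G,G] \subseteq \ker \pi$. Thus $Q$ is a finite abelian group in which every element has order at most $2$, which is precisely the definition of an elementary abelian $2$-group.

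The argument is essentially immediate; there is no serious obstacle. The only points needing a moment's care are the commutator identity $[t,h] = h^{-2}$ and the normal form $G = H \cup Ht$, both of which follow at once from the semidirect product structure. (One could equivalently phrase the whole proof by noting that $G$ is generated by the involution $t$ together with elements $h \in H$ that become involutions modulo $[G,G]$, so $G^{\mathrm{ab}}$ is generated by involutions; passing through $[G,G]$ is simply the most economical way to say this.)
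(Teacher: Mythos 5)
Your proof is correct and follows essentially the same route as the paper: show every square in $G$ lies in the commutator subgroup by checking the two cases $h \in H$ and $ht \in Ht$, the first via the commutator $[t,h]=h^{-2}$ and the second via $(ht)^2=1$. The paper phrases this in coordinates $(h,\pm 1)$ of the semidirect product, but the computation is identical.
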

\begin{proof}
Let $G = H \rtimes \{\pm 1\}$ be a generalized dihedral group, where $H$ is an abelian group.
We use multiplicative notation for the groups $G$, $H$ and $\{\pm 1\}$.
Let $N$ be a normal subgroup of $G$ such that $G/N$ is a finite abelian group.
Then $G' \subseteq N$.
To prove that every element of $G/N$ has order at most two, it is enough to show that $G^2 \subseteq G'$.
The multiplication in $G$ is defined by $(h_1,a_1)(h_2,a_2) = (h_1 h_2^{a_1}, a_1 a_2)$.
So
\[
(h,1)^2 = (h^2,1) = (h,1)(1,-1)(h,1)^{-1}(1,-1)^{-1} \in G'
\]
and $(h,-1)^2 = 1 \in G'$ for every $h \in H$.
\end{proof}

The Galois group of the extension $\Q(\lambda)/\Q$ is isomorphic to $(\Z/N\Z)^{\times}$.
We need to know the size of the group $((\Z/N\Z)^{\times})^2$.
The following lemma solves this problem.
\begin{lemma} \label{lemma:(Z/NZ)*/(Z/NZ)*^2}
If $N$ is a positive integer, then
\[
|((\Z/N\Z)^{\times})/((\Z/N\Z)^{\times})^2| = 2^{\omega(N) - c_1(N)}.
\]
\end{lemma}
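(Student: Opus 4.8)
The plan is to compute $|((\Z/N\Z)^{\times})/((\Z/N\Z)^{\times})^2|$ via the Chinese Remainder Theorem and the classification of the unit groups of $\Z/p^e\Z$. Writing $N = \prod_{p} p^{e_p}$ over the primes $p \mid N$, we have a group isomorphism $(\Z/N\Z)^{\times} \cong \prod_p (\Z/p^{e_p}\Z)^{\times}$, and squaring respects the product decomposition, so
\[
|((\Z/N\Z)^{\times})/((\Z/N\Z)^{\times})^2| = \prod_{p \mid N} |((\Z/p^{e_p}\Z)^{\times})/((\Z/p^{e_p}\Z)^{\times})^2|.
\]
Thus it suffices to treat each prime power $q = p^e$ separately and then count how the local factors multiply up.

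For an odd prime $p$, the group $(\Z/p^e\Z)^{\times}$ is cyclic, hence its quotient by squares has order $\gcd(2, \varphi(p^e)) = 2$. For $p = 2$: when $e = 1$ the group is trivial, contributing a factor $1$; when $e = 2$ the group is cyclic of order $2$, contributing a factor $2$; and when $e \ge 3$ the group is $\Z/2\Z \times \Z/2^{e-2}\Z$, whose quotient by squares has order $4$. Collecting these: each of the $\omega(N)$ primes dividing $N$ contributes a factor $2$, \emph{except} that the prime $2$ behaves anomalously — it contributes $1$ (one fewer factor of $2$ than expected) when $2 \,\|\, N$, and it contributes $4$ (one extra factor of $2$) when $8 \mid N$. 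This is precisely the bookkeeping encoded by $c_1(N)$: writing $N = 2^k(2m+1)$, the exponent correction is $0$ when $k = 2$, is $-1$ when $k = 1$ (since $c_1 = 1$ and we want $2^{-1}$ relative to the naive $2^{\omega(N)}$), is $+1$ when $k \ge 3$ (here $c_1 = -1$), and is $0$ when $k = 0$ (no prime $2$ at all, $c_1 = 0$). Hence the product equals $2^{\omega(N) - c_1(N)}$, which is the claim.

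I do not expect any genuine obstacle here; the only thing requiring care is matching the sign conventions in the definition of $c_1(n)$ against the local factors, in particular checking the boundary cases $k = 0$ (so $N$ odd, and the prime $2$ simply does not appear) and $k = 2$ (where the factor is the "expected" $2$ and so $c_1 = 0$ is correct). Once the case analysis for $q = 2^e$ is laid out explicitly and cross-checked against $c_1$, the proof is a one-line assembly via the multiplicativity displayed above.
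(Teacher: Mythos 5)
Your proposal is correct and follows essentially the same route as the paper: CRT decomposition of $(\Z/N\Z)^{\times}$, the standard structure of $(\Z/p^e\Z)^{\times}$ to get the local factors ($2$ for odd $p$; $1$, $2$, or $4$ for $p=2$ according as $e=1$, $e=2$, or $e\ge 3$), and the bookkeeping against $c_1(N)$, which you carry out correctly in all four cases of $k$.
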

\begin{proof}
Let $N = \prod_{i=1}^s p_i^{a_i}$, where $p_1, \dotsc, p_s$ are distinct primes and $a_i \in \Z_{\ge 1}$.
Using the Chinese remainder theorem we obtain the isomorphism
\[
((\Z/N\Z)^{\times})/((\Z/N\Z)^{\times})^2 \cong \prod_{i=1}^s ((\Z/p_i^{a_i}\Z)^{\times})/((\Z/p_i^{a_i}\Z)^{\times})^2.
\]
Let $p$ be a prime and $a$ a positive integer.
It is well known that
\[
(\Z/p^a\Z)^{\times} \cong
\begin{cases}
\Z/(p^{a-1}(p-1)\Z) & \textrm{if } p \textrm{ is odd,} \\
1 & \textrm{if } p=2 \textrm{ and } a=1, \\
\Z/2\Z & \textrm{if } p=2 \textrm{ and } a=2, \\
(\Z/2\Z) \times (\Z/2^{a-2}\Z) & \textrm{if } p=2 \textrm{ and } a \ge 3.
\end{cases}
\]
Hence
\[
(\Z/p^a\Z)^{\times} / ((\Z/p^a\Z)^{\times})^2 \cong
\begin{cases}
\Z/2\Z & \textrm{if } p \textrm{ is odd,} \\
1 & \textrm{if } p=2 \textrm{ and } a=1, \\
\Z/2\Z & \textrm{if } p=2 \textrm{ and } a=2, \\
(\Z/2\Z) \times (\Z/2\Z) & \textrm{if } p=2 \textrm{ and } a \ge 3.
\end{cases}
\]
The statement of the lemma follows by applying this result for each $p_i$ and $a_i$.
\end{proof}

The following proposition shows that the intersection of a field $\Q(j(\mathcal{O}))$ and a cyclotomic field cannot be too big.
\begin{proposition} \label{prop:bound-for-j(O)-cap-Q(lambda)}
Let $L$ be an imaginary quadratic number field, $\mathcal{O} \subseteq L$ an order, $N$ a positive integer and $\lambda \in \C$ a primitive $N^{\textrm{th}}$ root of unity.
Then
\begin{equation} \label{eq:bound-for-j(O)-cap-Q(lambda)}
[\Q(j(\mathcal{O})) \cap \Q(\lambda) : \Q] \le 2^{\omega(N) - c_1(N) - c_2(N)}.
\end{equation}
\end{proposition}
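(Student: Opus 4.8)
The plan is to realise $\Q(j(\mathcal{O}))$ inside the ring class field of $\mathcal{O}$ and to exploit two classical features of that field: it is Galois over $\Q$ with generalized dihedral Galois group, and $j(\mathcal{O})$ is a real number. First I would recall from \cite{Cox:Primes_of_the_form_x2+ny2} that $L(j(\mathcal{O}))$ is the ring class field of $\mathcal{O}$; that it is Galois over $\Q$; and that $\Gal(L(j(\mathcal{O}))/\Q)$ is a generalized dihedral group, namely the semidirect product of $\Gal(L(j(\mathcal{O}))/L)$ (which is isomorphic to the class group of $\mathcal{O}$) by the order-$2$ subgroup generated by complex conjugation, which acts by inversion. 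In particular $\Q(j(\mathcal{O})) \subseteq L(j(\mathcal{O}))$. Moreover $j(\mathcal{O})$ is real: taking for the class of $\mathcal{O}$ a representative $\tau_0$ with $-\overline{\tau_0} \equiv \tau_0 \pmod{\Z}$ (e.g.\ $\tau_0 = \tfrac12\sqrt{D(\mathcal{O})}$ or $\tfrac12(1+\sqrt{D(\mathcal{O})})$, according to whether $D(\mathcal{O}) \equiv 0$ or $1 \pmod 4$), the rationality of the $q$-expansion of $j$ gives $\overline{j(\mathcal{O})} = j(-\overline{\tau_0}) = j(\tau_0) = j(\mathcal{O})$. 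Hence $\Q(j(\mathcal{O})) \subseteq \R$.

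Next, set $F = \Q(j(\mathcal{O})) \cap \Q(\lambda)$. Since $F \subseteq \Q(\lambda)$, the extension $F/\Q$ is abelian, hence Galois; and since $F \subseteq \Q(j(\mathcal{O})) \subseteq L(j(\mathcal{O}))$ with $L(j(\mathcal{O}))/\Q$ Galois, $\Gal(F/\Q)$ is an abelian quotient of the generalized dihedral group $\Gal(L(j(\mathcal{O}))/\Q)$. By Lemma \ref{lemma:gen-dihedral-group}, $\Gal(F/\Q)$ is an elementary abelian $2$-group.

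Now let $M$ be the largest subextension of $\Q(\lambda)/\Q$ whose Galois group is an elementary abelian $2$-group. Then $F \subseteq M$ by the previous paragraph, and $F \subseteq \Q(j(\mathcal{O})) \subseteq \R$ by the first, so $F \subseteq M \cap \R$; it thus suffices to prove $[M \cap \R : \Q] = 2^{\omega(N) - c_1(N) - c_2(N)}$. By Galois theory $\Gal(M/\Q)$ is the largest elementary abelian $2$-quotient of $\Gal(\Q(\lambda)/\Q) \cong (\Z/N\Z)^{\times}$, i.e.\ $(\Z/N\Z)^{\times}/((\Z/N\Z)^{\times})^2$, so $[M:\Q] = 2^{\omega(N) - c_1(N)}$ by Lemma \ref{lemma:(Z/NZ)*/(Z/NZ)*^2}. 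Since $M/\Q$ is Galois, $M$ is stable under complex conjugation, and $M \ne M \cap \R$ --- in which case $[M : M \cap \R] = 2$ --- if and only if $M$, equivalently $\Q(\lambda)$, contains an imaginary quadratic field (the equivalence because every quadratic subfield of $\Q(\lambda)$, being elementary abelian over $\Q$, lies in $M$). Finally, a short computation with conductors of quadratic fields shows that $\Q(\lambda) = \Q(\zeta_N)$ contains an imaginary quadratic field exactly when $4 \mid N$ or some prime $p \equiv -1 \pmod 4$ divides $N$, i.e.\ exactly when $c_2(N) = 1$. Putting these together gives $[M \cap \R : \Q] = 2^{\omega(N) - c_1(N) - c_2(N)}$, hence $[F:\Q] \le 2^{\omega(N) - c_1(N) - c_2(N)}$.

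I expect the main obstacle to be the last step --- showing that the presence of an imaginary quadratic subfield of $\Q(\zeta_N)$ is controlled precisely by the condition defining $c_2(N)$. This amounts to checking the cases in which $2^1$, $2^2$, or $2^3$ divides $N$ and tracking the odd prime divisors of $N$ modulo $4$, while remembering that the conductor of $\Q(\sqrt m)$ (for $m$ squarefree) is $|m|$ if $m \equiv 1 \pmod 4$ and $4|m|$ otherwise. A secondary point to watch is that the hypothesis $F \subseteq \R$ is genuinely used: it is responsible for the factor $2^{c_2(N)}$ beyond the bound $2^{\omega(N) - c_1(N)}$ that Lemmas \ref{lemma:gen-dihedral-group} and \ref{lemma:(Z/NZ)*/(Z/NZ)*^2} would yield on their own, even though individual Galois conjugates of $j(\mathcal{O})$ need not be real.
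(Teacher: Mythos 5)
Your proof is correct and follows essentially the same route as the paper: the generalized dihedral structure of $\Gal(L(j(\mathcal{O}))/\Q)$ plus Lemma \ref{lemma:gen-dihedral-group} force the intersection into the fixed field of $G^2$, Lemma \ref{lemma:(Z/NZ)*/(Z/NZ)*^2} gives that field degree $2^{\omega(N)-c_1(N)}$, and the reality of $j(\mathcal{O})$ saves the extra factor $2^{c_2(N)}$. The only difference is cosmetic: the paper exhibits $\Q(\lambda)^{G^2}$ explicitly as a compositum of quadratic fields via Gauss sums and reads off from the generators when it is non-real, whereas you identify the imaginary quadratic subfields of $\Q(\zeta_N)$ by a conductor computation --- and for the stated inequality only the easy direction of that computation (producing $\Q(\sqrt{-1})$ or $\Q(\sqrt{-p})$ when $c_2(N)=1$) is actually needed.
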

\begin{proof}
Let $N = 2^k p_1^{\alpha_1} \dotsm p_r^{\alpha_r}$, where $p_1, \dotsc, p_r$ are distinct odd primes, $k \in \Z_{\ge 0}$ and $\alpha_1, \dotsc, \alpha_r \in \Z_{\ge 1}$.
Let
\begin{equation} \label{eq:def-of-field-E}
E = \begin{cases}
\Q(\sqrt{p_1^*}, \dotsc, \sqrt{p_r^*}) & \textrm{if } k=0 \textrm{ or } 1, \\
\Q(\sqrt{-1}, \sqrt{p_1^*}, \dotsc, \sqrt{p_r^*}) & \textrm{if } k=2, \\
\Q(\sqrt{-1}, \sqrt{2}, \sqrt{p_1^*}, \dotsc, \sqrt{p_r^*}) & \textrm{if } k \ge 3,
\end{cases}
\end{equation}
where $p^* = (-1)^{\frac{p-1}{2}} p$ for every odd prime $p$.
We will show that $E \subseteq \Q(\lambda)$.
Let $\zeta_n = e^{\frac{2\pi i}{n}}$ for every positive integer $n$, then $\Q(\lambda) = \Q(\zeta_N)$.
It is a basic fact from the theory of Gauss sums that
\[
\sqrt{p^*} = \sum_{a=0}^{p-1} \zeta_p^{a^2} \in \Q(\zeta_p)
\]
holds for every odd prime $p$.
Moreover $\sqrt{-1} \in \Q(\zeta_4)$ and $\sqrt{2} \in \Q(\zeta_8)$, therefore $E \subseteq \Q(\lambda)$.

It is well known that $\Q(\lambda)/\Q$ is an abelian extension with Galois group
\[
G = \Gal(\Q(\lambda)/\Q) \cong (\Z/N\Z)^{\times}.
\]
We will prove that $E = \Q(\lambda)^{G^2}$.
If $x \in \Q$, $\sqrt{x} \in E$ and $\sigma \in G$, then $\sigma(\sqrt{x}) = \pm \sqrt{x}$, hence $\sigma^2(\sqrt{x}) = \sqrt{x}$, therefore $\sqrt{x} \in \Q(\lambda)^{G^2}$.
Applying this to the generators of $E$, we obtain $E \subseteq \Q(\lambda)^{G^2}$.
Moreover by Lemma \ref{lemma:(Z/NZ)*/(Z/NZ)*^2}
\[
[E:\Q] = 2^{\omega(N) - c_1(N)} = \frac{|G|}{|G|^2} = \frac{[\Q(\lambda):\Q]}{[\Q(\lambda):\Q(\lambda)^{G^2}]} = [\Q(\lambda)^{G^2}:\Q]
\]
holds, proving $E = \Q(\lambda)^{G^2}$.

Let $E' = L(j(\mathcal{O})) \cap \Q(\lambda)$ and $E'' = \Q(j(\mathcal{O})) \cap \Q(\lambda)$, then $E'' \subseteq E'$.
We will prove that $E' \subseteq E$.
Lemma 9.3 and Theorem 11.1 in \cite{Cox:Primes_of_the_form_x2+ny2} show that the extension $L(j(\mathcal{O}))/\Q$ is Galois, and $\Gal(L(j(\mathcal{O}))/\Q)$ is a generalized dihedral group.
So $E' = L(j(\mathcal{O})) \cap \Q(\lambda)$ is a finite Galois extension of $\Q$.
Consider the following isomorphisms of groups:
\[
\Gal(L(j(\mathcal{O}))/\Q)/\Gal(L(j(\mathcal{O}))/E') \cong \Gal(E'/\Q) \cong G/\Gal(\Q(\lambda)/E').
\]
The second isomorphism shows that $\Gal(E'/\Q)$ is a finite abelian group, while the first shows that it is a quotient of a generalized dihedral group.
So $\Gal(E'/\Q) \cong G/\Gal(\Q(\lambda)/E')$ is an elementary abelian $2$-group by Lemma \ref{lemma:gen-dihedral-group}.
Then $G^2 \subseteq \Gal(\Q(\lambda)/E')$, so
\[
E'' \subseteq E' \subseteq \Q(\lambda)^{G^2} = E.
\]

Since $[E'':\Q] \mid [E:\Q] = 2^{\omega(N)-c_1(N)}$, it is enough to prove that $E'' \neq E$ if $c_2(N)=1$.
So suppose $c_2(N) = 1$.
Then $\sqrt{-1} \in E$ or $\sqrt{-p} \in E$ for some prime $p$, hence $E \not\subseteq \R$.
However $j(\mathcal{O}) \in \R$, because $\overline{j(\mathcal{O})} = j(\overline{\mathcal{O}}) = j(\mathcal{O})$.
So $E'' \subseteq \R$, therefore $E'' \neq E$.
\end{proof}

\begin{remark} \label{remark:j(O)-cap-Q(lambda)-bound-optimal}
The bound in Proposition \ref{prop:bound-for-j(O)-cap-Q(lambda)} is optimal.
To see this, take an imaginary quadratic number field $L$, a positive integer $N$ and a primitive $N^{\textrm{th}}$ root of unity $\lambda$.
We will show that there is an order $\mathcal{O}$ in $L$ such that equality holds in \eqref{eq:bound-for-j(O)-cap-Q(lambda)}.
Let $N = 2^k p_1^{\alpha_1} \dotsm p_r^{\alpha_r}$, where $p_1, \dotsc, p_r$ are distinct odd primes, $k \in \Z_{\ge 0}$ and $\alpha_1, \dotsc, \alpha_r \in \Z_{\ge 1}$.
Define the field $E$ as in \eqref{eq:def-of-field-E}.
We have seen during the proof of Proposition \ref{prop:bound-for-j(O)-cap-Q(lambda)} that $[E:\Q] = 2^{\omega(N)-c_1(N)}$ and $L(j(\mathcal{O})) \cap \Q(\lambda) \subseteq E$ for every order $\mathcal{O}$ in $L$.

The field $LE$ is generated over $\Q$ by the square roots of a few integers, thus it is a Galois extension with $\Gal(LE/\Q) \cong (\Z/2\Z)^s$ for some $s \ge 1$.
So $\Gal(LE/\Q)$ is a generalized dihedral group and $LE/L$ is abelian, hence by Corollary 11.35 in \cite{Cox:Primes_of_the_form_x2+ny2} there is an order $\mathcal{O}$ in $L$ such that $LE \subseteq L(j(\mathcal{O}))$.
So $E \subseteq L(j(\mathcal{O})) \cap \Q(\lambda) \subseteq E$, hence $E = L(j(\mathcal{O})) \cap \Q(\lambda)$.
Note that $\Q(j(\mathcal{O})) = \R \cap L(j(\mathcal{O}))$.
If $c_2(N)=0$, then
\[
E = \R \cap E = \R \cap L(j(\mathcal{O})) \cap \Q(\lambda) = \Q(j(\mathcal{O})) \cap \Q(\lambda).
\]
If $c_2(N)=1$, then $E \not\subseteq \R$, so $\Q(j(\mathcal{O})) \cap E = \Q(j(\mathcal{O})) \cap \Q(\lambda)$ and $\Q(j(\mathcal{O})) E = L(j(\mathcal{O}))$, therefore
\[
[E:\Q(j(\mathcal{O})) \cap \Q(\lambda)] = [L(j(\mathcal{O})):\Q(j(\mathcal{O}))] = 2
\]
by Theorem 1.12, Ch.\ VI, \S 1 in \cite{Lang:Algebra}.
Either way we have
\[
[\Q(j(\mathcal{O})) \cap \Q(\lambda):\Q] = 2^{\omega(N) - c_1(N) - c_2(N)}.
\]
\end{remark}

The following proposition gives us an upper bound for $n$ provided we have an upper bound for $\frac{\varphi(n)}{2^{\omega(n)}}$.
\begin{proposition} \label{prop:phi(n)/2^omega(n)<a}
Let $n$ be a positive integer.
If $\frac{\varphi(n)}{2^{\omega(n)}} \le a$ for a real number $a \ge 8$, then
\[
n < a^{1+\frac{2}{\log \log a}}.
\]
\end{proposition}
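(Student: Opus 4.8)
The plan is to start from the identity
\[
n = \frac{\varphi(n)}{2^{\omega(n)}} \cdot 2^{\omega(n)} \cdot \frac{n}{\varphi(n)},
\]
which is immediate from $\varphi(n) = n \prod_{p \mid n}(1 - \frac{1}{p})$. Taking logarithms and using the hypothesis $\varphi(n)/2^{\omega(n)} \le a$ gives
\[
\log n \le \log a + \omega(n) \log 2 + \log \frac{n}{\varphi(n)},
\]
so it suffices to prove $\omega(n) \log 2 + \log \frac{n}{\varphi(n)} < \frac{2 \log a}{\log\log a}$. Both extra terms will be controlled by an upper bound for $\omega(n)$.

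To bound $\omega(n)$, let $p_k$ denote the $k$-th prime. Since $\frac{x-1}{2}$ is increasing and $p^{e-1} \ge 1$,
\[
\frac{\varphi(n)}{2^{\omega(n)}} = \prod_{p^e \| n} \frac{p^{e-1}(p-1)}{2} \ \ge\ \prod_{p \mid n} \frac{p-1}{2} \ \ge\ \prod_{k=1}^{\omega(n)} \frac{p_k - 1}{2},
\]
the last inequality because the $i$-th smallest prime divisor of $n$ is $\ge p_i$. Hence $\prod_{k=1}^{\omega(n)} \frac{p_k-1}{2} \le a$. Using $p_k > k \log k$ together with $p_k - 1 \ge \frac{2}{3} p_k$ for $k \ge 2$, one checks $\prod_{k=1}^{r} \frac{p_k-1}{2} \ge \frac{r!}{2}\bigl(\frac{\log 2}{3}\bigr)^{r-1}$; since this outgrows every geometric term, Stirling's formula forces $\omega(n)\bigl(\log \omega(n) - c_0\bigr) \le \log a + O(1)$ for an explicit $c_0$, whence $\omega(n) = O\!\bigl(\frac{\log a}{\log\log a}\bigr)$, and in particular $\omega(n) \log 2 \le \frac{2 \log a}{\log\log a}$ with ample room once $a$ is past an explicit constant.

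For the last term I would use $\frac{n}{\varphi(n)} = \prod_{p \mid n} \frac{p}{p-1} \le \prod_{k=1}^{\omega(n)} \frac{p_k}{p_k-1}$ (again by monotonicity of $\frac{x}{x-1}$ and domination of the prime divisors by the $p_i$), and by an effective form of Mertens' theorems (Rosser--Schoenfeld) $\prod_{k=1}^{w} \frac{p_k}{p_k-1} \le e^{\gamma} \log p_w + O(1)$. As $p_{\omega(n)}$ is polynomially bounded in $\log a$, this gives $\log \frac{n}{\varphi(n)} = O(\log\log\log a)$, which is absorbed by the slack above. Combining, $\log n < \log a + \frac{2 \log a}{\log\log a}$, i.e.\ $n < a^{1 + 2/\log\log a}$, as soon as $a$ exceeds the threshold.

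The main obstacle is not this asymptotic regime — there the coefficient $\log 2 \approx 0.693$ sits safely below the target $2$ — but the finite range of small $a$ (equivalently the cases $\omega(n) = r$ with $r$ below an explicit bound), where the inequality is essentially sharp; the tightest case is $\omega(n) = 6$, for which $\varphi(n)/2^{\omega(n)} \le a$ forces $a \ge 90$ and $n \le 2 \cdot 3 \cdot 5 \cdot 7 \cdot 11 \cdot 13 = 30030$, whereas $a^{1+2/\log\log a} \ge 90^{1 + 2/\log\log 90} \approx 3.6 \cdot 10^{4}$ is larger by only about a factor $1.2$. I would dispose of these cases by hand: for each admissible $r$, $r \log 2 + \log \frac{n}{\varphi(n)} \le r \log 2 + \log \prod_{k=1}^{r} \frac{p_k}{p_k-1}$, an explicit constant, and one verifies it is $< \frac{2 \log a}{\log\log a}$ for every admissible $a$ (the infimum of $\frac{2 \log a}{\log\log a}$ over $a \ge 8$ being $2e$, attained at $a = e^e$). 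It is important here to keep $\log \frac{n}{\varphi(n)}$ intact rather than bounding it crudely by $\sum_{p \mid n} \frac{1}{p-1}$, since replacing $\log 2$ by $1$ for the prime $2$ alone would already overshoot the margin in the tight cases. Finally, $n = 1$ is immediate because $a > 1$.
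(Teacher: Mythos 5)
Your argument is correct and is essentially the paper's own proof reorganized: both rest on the same Rosser--Schoenfeld-type domination $n/\varphi(n)\le\prod_{k\le\omega(n)}\frac{p_k}{p_k-1}$ and $\varphi(n)/2^{\omega(n)}\ge\prod_{k\le\omega(n)}\frac{p_k-1}{2}$, reducing everything to the inequality $\log\bigl(2^{r}\prod_{k\le r}\tfrac{p_k}{p_k-1}\bigr)<\tfrac{2\log a}{\log\log a}$ over admissible $a$, checked by hand in the tight middle range and asymptotically for large $r$ (the paper indexes by the primorial interval containing $n$ rather than by $\omega(n)$, and uses Robin's bound $\log M_s>s\log s$ where you use $p_k>k\log k$). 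The only slip is the parenthetical claim that $\omega(n)=6$ forces $n\le 30030$ (that is the minimum, not the maximum, of such $n$), but your actual verification does not use it and goes through as stated.
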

\begin{proof}
We need to prove that $n < u(a)$, where
\[
u \colon (e, \infty) \to \R, \quad x \mapsto x^{1 + \frac{2}{\log \log x}}.
\]
Using derivation one can check that the function $u(x)$ is minimal at $x = e^{e^{\sqrt{3}-1}} \approx 7.99919$, and it increases in the interval $[8, \infty)$.
So $u(a) \ge u(8) > 2345$, hence we may assume that $n \ge 2346$.

Let $P_i$ denote the $i^\textrm{th}$ prime number, and let $M_s = \prod_{i=1}^s P_i$ and $A_s = \prod_{i=1}^s \frac{P_i}{P_i-1}$ for every positive integer $s$.
Lemma 14 in \cite{Rosser-Schoenfeld:Approx_formulas} says that $\frac{m}{\varphi(m)} \le A_s$ holds for all positive integers $s$ and $m$ such that $m < M_{s+1}$.
Clearly $m < M_{s+1}$ also implies $\omega(m) \le s$.
Suppose $2346 \le n < M_6 = 30030$, then $\frac{n}{\varphi(n)} \le 2 \cdot \frac{3}{2} \cdot \frac{5}{4} \cdot \frac{7}{6} \cdot \frac{11}{10} = \frac{77}{16}$ and $\omega(n) \le 5$, therefore $a \ge \frac{\varphi(n)}{2^{\omega(n)}} \ge \frac{n}{154} > 8$.
So it is enough to prove that $n < u(\frac{n}{154})$.
This follows from the inequality $\frac{u(x)}{x} = e^{2 \frac{\log x}{\log \log x}} \ge e^{2e} > 154$, which is true for every $x > e$, because $\frac{y}{\log y} \ge e$ for every $y>1$.
So we may assume that $n \ge M_6$.

There is a unique integer $s \ge 6$ such that $M_s \le n < M_{s+1}$.
Then $\omega(n) \le s$ and $\frac{n}{\varphi(n)} \le A_s$, so $a \ge \frac{\varphi(n)}{2^{\omega(n)}} \ge \frac{n}{2^s A_s}$.
Note that
\[
\frac{n}{2^s A_s} \ge \frac{M_s}{2^s A_s} = \prod_{i=1}^s \frac{P_i-1}{2} \ge \frac{1}{2} \cdot \frac{2}{2} \cdot \frac{4}{2} \cdot \frac{6}{2} \cdot \frac{10}{2} \cdot \frac{12}{2} = 90 > e^e > 8.
\]
Thus $u(a) \ge u(\frac{n}{2^s A_s})$, so it is enough to prove $n < u(\frac{n}{2^s A_s})$.
Let $y = \frac{n}{2^s A_s}$, then we need to show that $\frac{u(y)}{y} > 2^s A_s$, or equivalently, that $\frac{z}{\log z} > \frac{1}{2} \log(2^s A_s)$, where $z = \log y$.
The function $\frac{x}{\log x}$ is increasing in the interval $[e, \infty)$, and $z \ge \log(\frac{M_s}{2^s A_s}) > e$, so it is enough to show that
\begin{equation} \label{eq:2^s-As-Ms}
\frac{1}{2} \log(2^s A_s) < \frac{c_s}{\log c_s},
\end{equation}
where $c_s = \log \frac{M_s}{2^s A_s}$.
One can easily check this in each of the cases $s = 6, 7, \dotsc, 12$, so we may assume that $s \ge 13$.
Then $\log M_s > s \log s$ by Theorem 4 in \cite{Robin}.
We have $A_m < (\frac{e}{2})^m$ for every $m \ge 6$, because $A_6 < (\frac{e}{2})^6$ and $\frac{A_{m}}{A_{m-1}} = \frac{P_m}{P_m-1} \le \frac{17}{16} < \frac{e}{2}$ for every $m \ge 7$.
Thus $2^s A_s < e^s$ and hence
\[
c_s > \log M_s - s > s (\log s - 1) > e.
\]
The function $\frac{x}{\log x}$ is increasing in the interval $[e, \infty)$, so we get
\[
\frac{c_s}{\log c_s} \ge \frac{s(\log s - 1)}{\log s + \log(\log s - 1)}.
\]
Using the inequality $\log x \le x-1$ for $x = \log s - 1$, we obtain
\[
\frac{s(\log s - 1)}{\log s + \log(\log s - 1)} \ge \frac{s}{2} > \frac{1}{2} \log(2^s A_s),
\]
which proves \eqref{eq:2^s-As-Ms}.
\end{proof}

\section{Proof of the results} \label{sec:Proofs}

We have already observed that Theorem \ref{thm:main-effective} implies Theorem \ref{thm:main-noneffective}.
We start the proof of Theorem \ref{thm:main-effective} by reducing the statement to the case $K=\Q$.
Let $S$ be the set of embeddings of $K$ into $\overline{K} = \overline{\Q}$.
Then $|S|=d$, and the polynomial $F'(X,Y) = \prod_{\sigma \in S} F^{\sigma}(X,Y)$ is in $\Q[X,Y]$.
One way to see this is to take any $\theta \in \Gal(L/\Q)$, where $L/\Q$ is a finite Galois extension such that $K \subseteq L$, and then observe that $F'^{\theta} = \prod_{\sigma \in S} F^{\theta \sigma} = \prod_{\eta \in S} F^{\eta} = F'$, because multiplying from the left by $\theta$ only permutes the elements of $S$.
The conditions for $F$ are also satisfied by $F'$: it is nonconstant, and its zero set contains no horizontal or vertical line.
Moreover $\delta_1' = \deg_X F' = d \delta_1$, $\delta_2' = \deg_Y F' = d \delta_2$, and $F'(\alpha,\lambda) = 0$.

Theorem 1.6.13 and Remark 1.6.14 in \cite{Bombieri-Gubler} give bounds for the height of a product of polynomials.
Using these results and the fact that $h(F^{\sigma}) = h(F)$ for every $\sigma \in S$, we get the bound
\[
h(F') \le d h(F) + (d-1) (\delta_1+\delta_2) \log 2.
\]
Applying the case of $K = \Q$ for $F'$ and $(\alpha, \lambda)$, we get the bounds in the statement of the theorem.
From now on we assume that $K=\Q$.

Now we make the second step.
Let us define the lattice $\Lambda = \Z + \Z \tau$ and the order $\mathcal{O} = \operatorname{End}(\C/\Lambda)$.
Since $\mathcal{O}$ is an order in $\Q(\tau)$, we can write it as $\Z + \Z \tau_0$ for some $\tau_0$ in $\Q(\tau) \cap \mathcal{H}$.
Then $\Lambda$ defines a proper fractional ideal $\mathfrak{a}$ of the order $\mathcal{O}$ (here proper means that the endomorphism ring of $\mathfrak{a}$ is $\mathcal{O}$).
Let $h(\mathcal{O})$ be the class number of $\mathcal{O}$, and let $\mathfrak{a}_1, \dotsc, \mathfrak{a}_{h(\mathcal{O})}$ be the proper fractional ideals of $\mathcal{O}$ representing the different classes.
Then $j(\mathcal{O})$ is an algebraic integer of degree $h(\mathcal{O})$, and $j(\mathfrak{a}_1)$, \ldots, $j(\mathfrak{a}_{h(\mathcal{O})})$ are the conjugates of it over $\Q$ (see Theorem 11.1 and Proposition 13.2 in \cite{Cox:Primes_of_the_form_x2+ny2}, or Theorem 5, \S 3, Ch.\ 10 in \cite{Lang:Elliptic_functions}).
So in particular $\alpha = j(\tau) = j(\mathfrak{a})$ and $\alpha_0 = j(\tau_0) = j(\mathcal{O})$ are conjugates over $\Q$, hence there is an automorphism $\sigma \in \operatorname{Aut}(\overline{\Q})$ such that $\sigma(\alpha) = \alpha_0$.
Applying $\sigma$ to the equation $F(\alpha,\lambda) = 0$, we get that $F(\alpha_0, \sigma(\lambda)) = 0$.
This shows that $(j(\tau_0), \sigma(\lambda))$ is a special point on $\mathcal{C}$, where the discriminant of $\operatorname{End}(\C/(\Z+\Z\tau_0))$ is $\Delta$, and $\sigma(\lambda)$ is a primitive $N^{\textrm{th}}$ root of unity.
The second step is finished, because we can replace the special point $(\alpha, \lambda)$ with $(j(\tau_0), \sigma(\lambda))$.
From now on we assume that $\Z + \Z \tau$ is an order.

Now we make the third step.
Since $K = \Q$, we have $d=1$ and $a = \max(8, \delta_2)$.
The polynomial $g(Y) = F(\alpha,Y) \in \Q(\alpha)[Y]$ is nonzero, because the zero set of $F$ contains no vertical line.
Moreover $g(\lambda) = 0$, so
\[
[\Q(\alpha,\lambda):\Q(\alpha)] \le \deg g \le \delta_2.
\]
Theorem 1.12, Ch.\ VI, \S 1 in \cite{Lang:Algebra} implies that
\[
[\Q(\lambda):\Q(\alpha) \cap \Q(\lambda)] = [\Q(\alpha, \lambda):\Q(\alpha)] \le \delta_2.
\]
Applying Proposition \ref{prop:bound-for-j(O)-cap-Q(lambda)}, we obtain \eqref{eq:N-precise-bound}.
Now Proposition \ref{prop:phi(n)/2^omega(n)<a} implies \eqref{eq:N-upper-bound}.

To prove \eqref{eq:Delta-upper-bound} we need to show that
\begin{equation} \label{eq:Delta-bound-K=Q}
\sqrt{|\Delta|} < a^{1+\frac{2.5}{\log \log a}} (h(F)+1).
\end{equation}
Here $\Delta = -4 (\Impart \tau)^2$ is the discriminant of the order $\mathcal{O} = \Z + \Z \tau$.
If $|\Delta| \le 10^6$, then
\[
\sqrt{|\Delta|} \le 1000 < a^{1+\frac{5/2}{\log\log a}} (h(F)+1),
\]
so we may assume that $|\Delta| > 10^6$.
Then $\Impart \tau = \frac{\sqrt{|\Delta|}}{2} > 500$, and from Proposition \ref{prop:j-estimate} we deduce that $\frac{|\alpha|}{e^{2\pi \Impart \tau}} \in [\frac{1}{2},2]$.
This leads to
\begin{equation} \label{eq:Delta-log-alpha-bound}
\log(|\alpha|-1) > \log|\alpha| - 1 \ge 2\pi \Impart \tau - \log 2 - 1 > 6 \Impart \tau = 3 \sqrt{|\Delta|},
\end{equation}
because $|\alpha| \ge \frac{1}{2} e^{2\pi \Impart \tau} > e^{500}$ and $\Impart \tau > 500 > \frac{\log 2 + 1}{2\pi - 6}$.

We multiply $F$ by a nonzero rational number, so that $F$ will have integer coefficients with gcd equal to $1$.
Then the maximum of the euclidean absolute values of the coefficients of $F$ is $H(F) = e^{h(F)}$.
Let $F = \sum_{i=0}^{\delta_1} g_i(Y) X^i$, where $g_i(Y) \in \Z[Y]$.
Here each $g_i$ has degree at most $\delta_2$.
Since $F(X,\lambda) \in \C[X]$ is a nonzero polynomial, $g_i(\lambda) \neq 0$ for some $i$.
Let $m$ be the maximal such $i$.
Then $\sum_{i=0}^m g_i(\lambda) \alpha^i = F(\alpha, \lambda) = 0$, and here $g_m(\lambda) \neq 0$.
So $-g_m(\lambda) \alpha^m = \sum_{i=0}^{m-1} g_i(\lambda) \alpha^i$.
By the triangle inequality we find that
\[
|g_m(\lambda)| \cdot |\alpha|^m \le \sum_{i=0}^{m-1} |g_i(\lambda)| \cdot |\alpha|^i.
\]
Note that $|g_i(\lambda)| \le (\delta_2+1) H(F)$ for every $i$.
So
\[
|g_m(\lambda)| \le (\delta_2+1) H(F) \sum_{i=0}^{m-1} |\alpha|^{i-m} < (\delta_2+1) H(F) \sum_{j=1}^{\infty} |\alpha|^{-j} = \frac{(\delta_2+1) H(F)}{|\alpha|-1},
\]
therefore
\begin{equation} \label{eq:upper-bound-for-log-alpha}
\log(|\alpha|-1) < h(F) + \log(\delta_2 + 1) - \log |g_m(\lambda)|.
\end{equation}
We need a lower bound for $|g_m(\lambda)|$.
If $g_m$ is nonconstant, then we apply Theorem A.1 in \cite{Bugeaud} with $P(X) = g_m(X)$, $Q(X) = X^N-1$ and $\beta = \lambda$.
We obtain
\begin{equation} \label{eq:upper-bound-for-gm(lambda)}
|g_m(\lambda)| \ge (\delta_2 + 1)^{1-N} (N+1)^{-\delta_2/2} H(F)^{1-N},
\end{equation}
because $\deg g_m \le \delta_2$ and all the coefficients of $g_m$ have euclidean absolute value at most $H(F)$.
If $g_m$ is constant, then $|g_m(\lambda)| \ge 1$, hence \eqref{eq:upper-bound-for-gm(lambda)} is still valid.
Therefore
\[
-\log |g_m(\lambda)| \le (N-1) (h(F) + \log(\delta_2 + 1)) + \frac{\delta_2}{2} \log(N+1),
\]
and together with \eqref{eq:Delta-log-alpha-bound} and \eqref{eq:upper-bound-for-log-alpha}, this implies that
\[
\sqrt{|\Delta|} < \frac{N}{3} (h(F) + \log(\delta_2 + 1)) + \frac{\delta_2}{6} \log(N+1).
\]
Here $\frac{N}{3} h(F) \le a^{1+\frac{2.5}{\log\log a}} h(F)$ and $\delta_2 \le a$, so to prove \eqref{eq:Delta-bound-K=Q} it is enough to show
\[
\frac{N}{3} \log(a+1) + \frac{a}{6} \log(N+1) < a^{1+\frac{2.5}{\log\log a}}.
\]
Since $N < a^{1+\frac{2}{\log \log a}}$ and
\[
N+1 < a^{1+\frac{2}{\log \log a}} + 1 < (a+1)^{1+\frac{2}{\log \log a}} < (a+1)^4,
\]
it is enough to prove
\[
\frac{1}{3} \left(a^{\frac{2}{\log\log a}} + 2\right) \log(a+1) < a^{\frac{2.5}{\log\log a}}.
\]
Here $2 < \frac{1}{5} a^{\frac{2}{\log\log a}}$ and $\log(a+1) < \frac{5}{4} \log a$, so
\[
\frac{1}{3} \left(a^{\frac{2}{\log\log a}} + 2\right) \log(a+1) < \frac{1}{2} a^{\frac{2}{\log\log a}} \log a.
\]
Hence it is enough to prove
\[
\frac{1}{2} \log a < a^{\frac{1}{2\log\log a}}.
\]
Taking logarithms and writing $x = \frac{1}{2} \log a > 1$, we get that this is equivalent to
\[
(\log x) \log(2x) < x.
\]
Note that $\frac{1}{2}\log x = \log(\sqrt{x}) \le \frac{1}{e} \sqrt{x}$, so $\log x \le \frac{2}{e} \sqrt{x}$, and similarly $\log(2x) \le \frac{2}{e} \sqrt{2x}$.
Hence $(\log x) \log(2x) \le \frac{4\sqrt{2}}{e^2} x < x$.

\section{Optimality of the bound for $N$ if $K=\mathbb{Q}$} \label{sec:Optimality}

We will show that if $K=\Q$, then the bound in \eqref{eq:N-precise-bound} is optimal in the following sense.
Let $N$ and $\delta_2$ be positive integers such that $\frac{\varphi(N)}{2^{\omega(N) - c_1(N) - c_2(N)}} \le \delta_2$.
Let $L$ be an imaginary quadratic number field and $\lambda \in \C$ a primitive $N^{\textrm{th}}$ root of unity.
Then there is an order $\mathcal{O}$ in $L$ and a polynomial $F(X,Y) \in \Q[X,Y]$ such that the zero set of $F$ does not contain a horizontal or vertical line, $\deg_Y F = \delta_2$, and $F(j(\mathcal{O}), \lambda) = 0$.

We may assume that $\delta_2 = \frac{\varphi(N)}{2^{\omega(N) - c_1(N) - c_2(N)}}$, because the expression on the right hand side is a positive integer, and one can multiply $F$ by $(X+Y)^s$ for any $s \in \Z_{\ge 0}$ to increase the degree in $Y$.
Note that $\omega(N) - c_1(N) - c_2(N) \ge 0$, so $\delta_2 \le \varphi(N)$.
First suppose that $\delta_2 = \varphi(N)$.
Take any order $\mathcal{O}$ in $L$.
Then
\[
F(X,Y) = f_{\alpha}(X) + f_{\lambda}(Y)
\]
works, where $f_{\alpha}$ and $f_{\lambda}$ denote the minimal polynomials over $\Q$ of $\alpha = j(\mathcal{O})$ and $\lambda$.
So we may assume that $\delta_2 < \varphi(N)$.

By Remark \ref{remark:j(O)-cap-Q(lambda)-bound-optimal}, there is an order $\mathcal{O}$ in $L$ such that
\[
[\Q(\alpha) \cap \Q(\lambda) : \Q] = 2^{\omega(N)-c_1(N)-c_2(N)}
\]
with $\alpha = j(\mathcal{O})$.
Then Theorem 1.12, Ch.\ VI, \S 1 in \cite{Lang:Algebra} implies that
\[
[\Q(\alpha, \lambda) : \Q(\alpha)] = [\Q(\lambda) : \Q(\alpha) \cap \Q(\lambda)] = \frac{\varphi(N)}{2^{\omega(N) - c_1(N) - c_2(N)}} = \delta_2.
\]
So there is a polynomial $g(Y) \in \Q(\alpha)[Y]$ of degree $\delta_2$ such that $g(\lambda) = 0$.
Writing each coefficient of $g$ as a polynomial of $\alpha$, we get a polynomial $G(X,Y) \in \Q[X,Y]$ such that $g(Y) = G(\alpha, Y)$ and $\deg_Y G = \delta_2$.
Then $G(\alpha, \lambda) = 0$, hence there is an irreducible factor $F \in \Q[X,Y]$ of $G$ such that $F(\alpha, \lambda) = 0$ and $\deg_Y F \le \delta_2$.
Suppose the zero set of $F$ contains a horizontal or vertical line.
Since $F$ is irreducible, this means that $F(X,Y) \in \Q[X]$ or $F(X,Y) \in \Q[Y]$.
In the first case $F(\alpha, Y) = F(\alpha, \lambda) = 0$, so $g(Y) = G(\alpha, Y) = 0$, which is impossible.
In the second case $F(X,\lambda) = F(\alpha, \lambda) = 0$, hence
\[
\varphi(N) \le \deg_Y F \le \delta_2 < \varphi(N),
\]
which is also impossible.
So the zero set of $F$ contains no horizontal or vertical line, moreover $F(\alpha,\lambda) = 0$ and $\deg_Y F \le \delta_2$.
Theorem \ref{thm:main-effective} says that $\deg_Y F \ge \delta_2$, so in fact $\deg_Y F = \delta_2$.

\subsection*{Acknowledgements}
This paper has its origins in the author's Ph.D.\ studies under the supervision of Gisbert W\"ustholz at ETH Z\"urich.
Therefore the author thanks Gisbert W\"ustholz for introducing him to this field, and for all the helpful discussions.

\bibliographystyle{plain}
\bibliography{Andre-Oort-P1xGm}

\end{document}